\theoremstyle{plain}
\newtheorem{thm}{Theorem}[section]
\newtheorem{lem}[thm]{Lemma}
\theoremstyle{definition}
\newtheorem{definition}[thm]{Definition}
\theoremstyle{remark}
\numberwithin{equation}{section}
\newcommand{\RR}{\mathbb{R}}
\newcommand{\f}{\frac}
\newcommand{\xx}{|x|^2}
\newcommand{\yy}{|y|^2}
\newcommand{\xy}{\langle x,y\rangle}
\newcommand{\ppp}[3]{\frac{\partial^2{#1}}{\partial{#2}\partial{#3}}}
\newcommand{\pppp}[4]%
  {\frac{\partial^3{#1}}{\partial{#2}\partial{#3}\partial{#4}}}
\newcommand{\p}{\phi}
\renewcommand{\a}{\alpha}
\renewcommand{\b}{\beta}
\newcommand{\ab}{(\alpha,\beta)}
\newcommand{\ta}{\tilde\alpha}
\newcommand{\tb}{\tilde\beta}
\newcommand{\ha}{\hat\alpha}
\newcommand{\hb}{\hat\beta}
\newcommand{\ba}{\bar\alpha}
\newcommand{\bb}{\bar\beta}
\newcommand{\aij}{a_{ij}}
\newcommand{\bi}{b_i}
\newcommand{\bj}{b_j}
\newcommand{\bk}{b_k}
\newcommand{\hbi}{\hat b_i}
\newcommand{\hbj}{\hat b_j}
\newcommand{\bbi}{\bar b_i}
\newcommand{\bbj}{\bar b_j}
\newcommand{\bij}{b_{i|j}}
\newcommand{\taij}{\tilde a_{ij}}
\newcommand{\tbij}{\tilde b_{i|j}}
\newcommand{\haij}{\hat a_{ij}}
\newcommand{\hbij}{\hat b_{i|j}}
\newcommand{\baij}{\bar a_{ij}}
\newcommand{\bbij}{\bar b_{i|j}}
\newcommand{\G}{G^i_\alpha}
\newcommand{\tG}{\tilde G^i_{\tilde\alpha}}
\newcommand{\hG}{\hat G^i_{\hat\alpha}}
\newcommand{\bG}{\bar G^i_{\bar\alpha}}
\newcommand{\rij}{r_{ij}}
\newcommand{\sij}{s_{ij}}
\newcommand{\ri}{r_i}
\newcommand{\si}{s_i}
\newcommand{\sio}{s^i{}_0}
\newcommand{\rj}{r_j}
\newcommand{\sj}{s_j}
\newcommand{\trij}{\tilde r_{ij}}
\newcommand{\hrij}{\hat r_{ij}}
\newcommand{\brij}{\bar r_{ij}}
\newcommand{\roo}{r_{00}}
\begin{document}
\title{On dually flat Randers metrics}
\author{Changtao Yu}
\date{}
\maketitle

\begin{abstract}
The notion of dually flat Finsler metrics arise from information geometry. In this paper, we will study a special class of Finsler metrics called Randers metrics to be dually flat. A simple characterization is provided and some non-trivial explicit examples are constructed. In particular, We will show that the dual flatness of a Randers metric always arises from that of some Riemannian metric by doing some special deformations.
\end{abstract}

\section{Introduction}
The notion of dual flatness for metrics was first proposed by S.-I. Amari and H. Nagaoka when they studied the information geometry on Riemannian manifold\cite{AN}. Later on, Z. Shen studied the information geometry in Finsler geometry and introduced the notion of dually flat Finsler metrics\cite{szm-rfgw}. A Finsler metric on a manifold is said to be {\it locally dually flat} if at any point there is a local coordinate system in which the spray coefficients of $F$ are in the form
\begin{eqnarray*}
G^i=-\f{1}{2}g^{ij}H_{y^j},
\end{eqnarray*}
where $H=H(x,y)$ is a scalar function on the tangent bundle $TM$. Z. Shen's result says that A Finsler metric $F(x,y)$ on an open subset $U\subseteq\RR^n$ is dually flat if and only if the following PDEs hold\cite{szm-rfgw}:
\begin{eqnarray*}
[F^2]_{x^ky^l}-2[F^2]_{x^l}=0.
\end{eqnarray*}

For a Riemannian metric $F=\sqrt{g_{ij}(x)y^iy^j}$, it is known that it is dually flat on $U$ if and only if its fundamental tensor is the Hessian of some local smooth function $\psi(x)$\cite{AN}, i.e.,
\begin{eqnarray*}
g_{ij}(x)=\ppp{\psi}{x^i}{x^j}(x).
\end{eqnarray*}

The first example of non-Riemannian dually flat Finsler metrics is the co-call {\it Funk metric}
\begin{eqnarray*}
F=\f{\sqrt{(1-\xx)\yy+\xy^2}}{1-\xx}\pm\f{\xy}{1-\xx}
\end{eqnarray*}
on the unit ball $\mathbb B^n(1)$\cite{csz-oldf}.

Funk metric belongs to a special class of Finsler metrics named {\it Randers metrics}. They are expressed as $F=\a+\b$, where $\a=\sqrt{a_{ij}(x)y^iy^j}$ is a Riemannian metric and $\b=b_i(x)y^i$ is an $1$-form with $b:=\|\b\|_\a<1$. Randers metrics were first introduced by a physicist G. Randers in 1941 when he studied general relativity. This special class of Finsler metrics play an important role in the research on Finsler geometry partly because of its computability. Many inspirational results have been obtained. For instance, based on the {\it navigation problem} on Riemannian manifold, D. Bao et al. classified the Randers metrics of constant flag curvature\cite{brs}. Here flag curvature is the most important geometrical quantity for Finsler metrics, which is the extension of sectional curvature in Riemannian geometry.

According to navigation problem, a Randers metric $F=\a+\b$ can always be expressed as
\begin{eqnarray*}\label{NPexpression}
F=\f{\sqrt{(1-|W|^2_h)h^2+(W^\flat)^2}}{1-|W|_h^2}-\f{W^\flat}{1-|W|^2_h},
\end{eqnarray*}

\noindent where $h$ is a Riemannian metric on the manifold $M$ and $W$ is a vector field with $|W|_h<1$. Here $W^\flat$ means the dual $1$-form of $W$ with respect to $h$. The {\it navigation data} $(h,W)$ is related to the original data $\ab$ by
\begin{eqnarray*}
h=\sqrt{1-b^2}\sqrt{\a^2-\b^2},\quad W^\flat=-(1-b^2)\b.
\end{eqnarray*}

\noindent The classification result shows that {\it a Randers metric $F=\a+\b$ is of constant flag curvature if and only if $h$ is of constant sectional curvature and $W$ is an infinitesimal homothety of $h$}\cite{brs}. For example, the Funk metric is of constant flag curvature $-\frac{1}{4}$ and all of its geodesics are straight lines.

Although it is known that the Funk metric is dual flat, it is almost the only known one in the class of dually flat Randers metrics\cite{csz-oldf}. The aim of this paper is to provide a more direct characterization of the dually flat Randers metrics based on the following result:
\begin{thm}\cite{csz-oldf}\label{maincf}
Let $F=\a+\b$ be a Randers metric on an open subset $U\subseteq\RR^n$. Then $F$ is dually flat if and only if $\a$ and $\b$ satisfy
\begin{eqnarray}
\G&=&(2\theta+\tau\b)y^i-\a^2(\tau b^i-\theta^i),\label{G}\\
\roo&=&2\theta\b-5\tau\b^2+(3\tau+2\tau b^2-2b_k\theta^k)\a^2,\label{rij}\\
s_{i0}&=&\b\theta_i-\theta b_i,\label{sij}
\end{eqnarray}
where $\theta=\theta_k(x)y^k$ is an $1$-form on $U$, $\theta^i:=a^{ik}\theta_k$, and $\tau=\tau(x)$ is a scalar function.
\end{thm}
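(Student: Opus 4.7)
The starting point is Shen's PDE characterization $[F^2]_{x^ky^l}-2[F^2]_{x^l}=0$. Substituting $F=\a+\b$ and expanding $F^2=\a^2+2\a\b+\b^2$, the required derivatives split naturally into a polynomial part in $y$ and a part containing the irrational factor $\a^{-1}$ (arising from derivatives of $\a$). Because $\a$ is not a rational function of $y$, these two parts must vanish separately, yielding two polynomial identities in $y$ whose coefficients depend on $\aij$, $\bi$, $\partial_k\aij$, and $\partial_k\bi$. Replacing the $x$-derivatives of $\aij$ and $\bi$ by the Christoffel symbols $\Gkij$ of $\a$ and by the covariant derivative $\bij$ converts the two identities into intrinsic expressions built from the spray coefficients $\G$, the trace $\roo$ (with $2\rij=\bij+b_{j|i}$), and the contraction $s_{i0}$ (with $2\sij=\bij-b_{j|i}$), together with $\a^2$, $\b$, and $\bi$.

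The next step is a standard polynomial-divisibility analysis: since $\a^2=\aij y^iy^j$ is an irreducible quadratic form in $y$, a polynomial in $y$ can be divisible by $\a^2$ only in very constrained ways. Applying this observation to the irrational-part identity forces $\roo$ to be expressible as a quadratic combination of $\a^2$ and $\b$, where the coefficients either depend only on $x$ (producing the scalar function $\tau(x)$) or depend linearly on $y$ (producing the $1$-form $\theta=\theta_k(x)y^k$). Plugging this form of $\roo$ back into the remaining identity then yields \eqref{sij} and \eqref{G} almost directly, once one recognizes which terms must be absorbed into $\theta$ and which into $\tau$.

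I expect the main obstacle to be the bookkeeping in the divisibility step: one must extract $\tau$ and $\theta$ as genuine tensors on $M$, independent of $y$, and verify consistency across both identities. This requires careful matching of monomial coefficients in $y$ to isolate the precise combinations forced to be proportional to $\a^2$, and to check that the $\theta$ and $\tau$ distilled from the $\roo$ equation agree with the constraints simultaneously imposed on $\sij$ and on $\G$. For the converse, one substitutes \eqref{G}, \eqref{rij}, and \eqref{sij} back into the two split identities and simplifies; the equations have been arranged so that this reduces to an algebraic tautology, thereby giving sufficiency.
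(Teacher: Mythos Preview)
This theorem is not proved in the present paper: it is quoted from \cite{csz-oldf} (Cheng--Shen--Zhou) and used as a black box, so there is no in-paper proof to compare against. Your sketch is the standard route for such Randers computations and is essentially what \cite{csz-oldf} does: expand Shen's PDE for $F^2=\a^2+2\a\b+\b^2$, separate rational and irrational parts in $y$, and then use irreducibility of $\a^2$ to force the appearance of a scalar $\tau$ and a $1$-form $\theta$.

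One point to be careful about in your outline: the ``bookkeeping'' step is not just a matter of patience. After splitting off the $\a$-irrational part you obtain relations of the form $P_3(y)=\a^2 Q_1(y)$ and $P_4(y)=\a^2 Q_2(y)$ with $P_k$ homogeneous of degree $k$ in $y$; the existence of $\tau$ and $\theta$ comes from the fact that the only homogeneous polynomials of low degree divisible by $\a^2$ are scalar multiples of $\a^2$ (degree $2$) or of the form $\a^2\cdot(\text{linear})$ (degree $3$). You should state explicitly why the resulting $\theta_k$ and $\tau$ are functions of $x$ alone and why the \emph{same} $\theta$ and $\tau$ work in all three equations \eqref{G}--\eqref{sij}; in practice one first extracts $\theta$ and $\tau$ from the $\roo$ relation, then substitutes back and checks that the remaining constraints reduce to \eqref{G} and \eqref{sij} with no new unknowns. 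Without that consistency check the argument is incomplete.
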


More specifically we will prove
\begin{thm}\label{main1}
Let $F=\a+\b$ be a Randers metric on an open subset $U\subseteq\RR^n$. Then the following items are equivalent:
\begin{enumerate}
\item $F$ is dually flat on $U$;
\item The navigation data $(h,W)$ of $F$ satisfies
\begin{eqnarray}\label{xx}
G^i_h=2\xi y^i+h^2\xi^i,\quad W_{i|j}=c(x)h_{ij}+2\xi_iW_i,
\end{eqnarray}
where $\xi:=\xi_iy^i$ is an $1$-form on $U$, $\xi^i:=h^{ij}\xi_j$, $c(x)$ is a scalar function;
\item The Riemannian metric $\ba:=(1-b^2)^{\frac{1}{4}}\a$ and the $1$-form $\bb:=(1-b^2)^{-\frac{1}{4}}\b$ satisfy
\begin{eqnarray}\label{yy}
\bG=2\bar\theta y^i+\ba^2\bar\theta^i,\quad\bbij=\bar c(x)\baij+2\bar\theta_i\bbj,
\end{eqnarray}
where $\bar\theta:=\bar\theta_iy^i$ is an $1$-form on $U$, $\bar\theta^i:=\bar a^{ij}\bar\theta_j$, $\bar c(x)$ is a scalar function.
\end{enumerate}
\end{thm}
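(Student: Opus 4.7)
The plan is to use Theorem~\ref{maincf} as a bridge. Given the characterization \eqref{G}--\eqref{sij} of dual flatness in terms of $(\a,\b)$, it suffices to translate this system into \eqref{xx} on the navigation data $(h,W)$ to obtain $(1)\Leftrightarrow(2)$, and into \eqref{yy} on the deformed data $(\ba,\bb)$ to obtain $(1)\Leftrightarrow(3)$.

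\textbf{Step 1: $(1)\Leftrightarrow(2)$.} Starting from the navigation identities $h^2=(1-b^2)(\a^2-\b^2)$ and $W^\flat=-(1-b^2)\b$, I would compute both $G^i_h$ and the $h$-covariant derivative $W_{i|j}$ in terms of $\G$, $\rij$, $\sij$ and the partial derivatives of $b^2$. Substituting \eqref{G}--\eqref{sij} into the resulting formulas and using $|W|_h^2=b^2/(1-b^2)$, the $\a^2$-piece of $G^i_h$ should reorganize into $h^2\xi^i$ for a suitable $\xi$ proportional to $\theta$, while the decomposition of $\bij$ supplied by \eqref{rij}--\eqref{sij} should collapse into the asserted form $W_{i|j}=c(x)h_{ij}+2\xi_iW_j$, with $c(x)$ a rational function of $\tau$ and $b^2$. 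The converse follows by reversing the algebra, since the navigation correspondence is invertible.

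\textbf{Step 2: $(1)\Leftrightarrow(3)$.} Here I would treat $\ba=e^\sigma\a$ with $\sigma=\tfrac14\ln(1-b^2)$ as a conformal deformation of $\a$, so that $\bb=e^{-\sigma}\b$. The standard conformal-change formula reads
\[
\bG=\G+\sigma_0\,y^i-\tfrac12\a^2\sigma^i,
\]
with $\sigma_0=\sigma_k y^k$, and an analogous identity expresses $\bbij$ in terms of $\bij$ together with first derivatives of $\sigma$. Substituting \eqref{G}--\eqref{sij} and noting that $\sigma^i$ is itself determined algebraically by $\rij,\sij,\bi$, the $\tau\a^2b^i$ term of \eqref{G} should be absorbed by the conformal correction $-\tfrac12\a^2\sigma^i$, and the isotropic parts of \eqref{rij}--\eqref{sij} should rearrange into the asserted form \eqref{yy}.

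\textbf{Main obstacle.} Both steps are structural consequences of standard deformation identities, so the real difficulty is algebraic rather than conceptual. The exponent $\pm\tfrac14$ in the definition of $(\ba,\bb)$ is forced by the specific coefficients (the $5\tau\b^2$ and $(3\tau+2\tau b^2)\a^2$ terms) appearing in \eqref{rij}; only this exponent makes the conformal correction on the $\a$-side cancel neatly against the deformation on the $\b$-side. Verifying these cancellations and identifying $\xi,c(x),\bar\theta,\bar c(x)$ explicitly in terms of $\theta,\tau,b^2$ is where the real calculation actually lies; once this is carried out, the three conditions are tautologically equivalent through Theorem~\ref{maincf}.
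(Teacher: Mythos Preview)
Your strategy is correct and matches the paper's own approach: both use Theorem~\ref{maincf} as the bridge and then translate \eqref{G}--\eqref{sij} to the deformed data by change-of-metric formulas. The paper packages the computation inside a uniform three-step $\b$-deformation framework (Lemmas~\ref{beta1}--\ref{beta3}) and determines the deformation factors $\kappa,\rho,\nu$ by solving the ODEs that force the unwanted $\b^2b^i$ and $\bi\bj$ terms to vanish; the navigation case $(2)$ falls out as the solution $\kappa=1$, and the conformal case $(3)$ as $\kappa=0$. Your Step~2 is exactly the $\kappa=0$ computation written out directly (the conformal formula you quote is the $\kappa=0$ specialization of Lemma~\ref{beta2}, with $\sigma_k=2\rho'(r_k+s_k)=-\tfrac32\tau b_k$ by \eqref{temp4}), and your Step~1 amounts to composing the $\kappa=1$ stretch with the conformal and scaling steps by hand. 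One small correction: in Step~1 the $1$-form $\xi$ is not literally proportional to $\theta$ but rather $\xi=\theta-\tau\b$ (in the paper's notation $\hat\theta$ with $\kappa=1$); this is what you would find once you carry out the algebra.
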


Note that the notations $W_{i|j}$ and $\bbij$ mean the covariant derivation of $W$ and $\bb$ with respect to $h$ and $\ba$ respectively.

The conditions for the Riemannian metrics in (\ref{xx}) and (\ref{yy}) are both imply that the corresponding Riemannian metric is dually flat on $U$ (see Section \ref{s3} for the reason). In other words, {\it the dual flatness of a Randers metric always arises from that of some Riemannian metric}.

Projectively flat $\ab$-metrics have the similar phenomenon. Recall that a Finsler metric on $U$ is said to {\it projectively flat} if all of its geodesics are straight lines, such as the Funk metric. {\it$\ab$-metrics}, which become a more extensive class of computable Finsler metrics\cite{bcs} including Randers metrics naturally, are defined by a Riemmannian metric and an $1$-form. The author proved in the doctoral dissertation that {\it the projective flatness of a non-trivial $\ab$-metric on a manifold with dimension $n\geq3$ always arises from that of some Riemannian metric}\cite{yct-dhfp}.

On the other hand, from Theorem \ref{main1} we can see that a special kind of $1$-forms appear frequently (see Section \ref{s3} and Section \ref{s4} for the related argument). It should be pointed out that the properties of such kind of $1$-forms are still not clear enough, but it seems that it is important for dually flat Finsler metrics. Actually, this kind of $1$-forms are also the main key in the discussion of dually flat $\ab$-metrics\cite{xoldf,yct-odfa}.

A special kinds of metric deformations called {\it $\b$-deformations} are the main tools in our discussions. They are first proposed by the author in his research on the projectively flat $\ab$-metrics\cite{yct-dhfp}, and they are effective for many other problems\cite{szm-yct-oesm,yct-odfa,yct-zhm-onan}. Actually, the navigation technique (\ref{NPexpression}) is just a special kind of $\b$-deformations. So one can regard $\b$-deformations as the generalization of the navigation technique for Randers metrics.

Also by using this new deformation method, we find some Riemannian metrics and $1$-forms satisfying the conditions in Theorem \ref{main1}, and hence construct some interesting dually flat Randers metrics below.
\begin{thm}\label{main2}
The following Randers metrics
\begin{eqnarray}\label{example}
F(x,y)=\f{\sqrt[4]{1+(\mu+\lambda^2)\xx}\sqrt{(1+\mu\xx)\yy-\mu\xy^2}}{1+\mu|x|^2}
+\f{\lambda\xy}{(1+\mu\xx)\sqrt[4]{1+(\mu+\lambda^2)\xx}}
\end{eqnarray}
are dually flat on $\mathbb B^n(r_\mu)$, where $\mu$ and $\lambda$ are constants, and the radius $r_\mu$ is determined by $r_\mu:=\frac{1}{\sqrt{-\mu}}$ when $\mu<0$ and $r_\mu:=+\infty$ when $\mu\geq0$.
\end{thm}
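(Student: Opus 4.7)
The strategy is to verify condition (3) of Theorem \ref{main1} for the given Randers metric. So I need to compute the Riemannian metric $\bar\alpha=(1-b^2)^{1/4}\alpha$ and the $1$-form $\bar\beta=(1-b^2)^{-1/4}\beta$, and then check the two identities in (\ref{yy}) for suitable $\bar\theta$ and $\bar c(x)$.

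First I would compute $b^2=\|\beta\|_\alpha^2$. The overall factor $\sqrt{1+(\mu+\lambda^2)\xx}$ in $\aij$ factors out, and the Sherman--Morrison identity gives $a^{ij}=\frac{(1+\mu\xx)(\delta^{ij}+\mu x^ix^j)}{\sqrt{1+(\mu+\lambda^2)\xx}}$. A clean cancellation then yields
\[
b^2=\frac{\lambda^2\xx}{1+(\mu+\lambda^2)\xx}, \qquad 1-b^2=\frac{1+\mu\xx}{1+(\mu+\lambda^2)\xx},
\]
which in particular shows that $F$ is a well-defined Randers metric precisely on $\mathbb B^n(r_\mu)$. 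Substituting into the definitions, the odd powers of $1+(\mu+\lambda^2)\xx$ drop out and
\[
\bar\alpha^2=\frac{(1+\mu\xx)\yy-\mu\xy^2}{(1+\mu\xx)^{3/2}}, \qquad \bar\beta=\frac{\lambda\xy}{(1+\mu\xx)^{5/4}}.
\]

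The key observation is that $\bar\alpha^2=(1+\mu\xx)^{1/2}\,h^2$, where $h^2=\frac{(1+\mu\xx)\yy-\mu\xy^2}{(1+\mu\xx)^2}$ is the Beltrami projective model of the space form of constant sectional curvature $\mu$ on $\mathbb B^n(r_\mu)$. Thus $\bar\alpha$ is conformal to $h$ with conformal factor $\sigma=\frac{1}{4}\ln(1+\mu\xx)$. Since $h$ is projectively flat with $G^i_h=-\frac{\mu\xy}{1+\mu\xx}y^i$, the standard conformal-change formula $\bG=G^i_h+\sigma_ky^k y^i-\frac{1}{2}h^2 h^{ij}\sigma_j$ yields directly
\[
\bG=-\frac{\mu\xy}{2(1+\mu\xx)}y^i-\frac{\mu(1+\mu\xx)^{1/2}}{4}x^i\,\bar\alpha^2,
\]
which matches the first identity of (\ref{yy}) with $\bar\theta=-\frac{\mu\xy}{4(1+\mu\xx)}$; the required equality $\bar\theta^i=\bar a^{ij}\bar\theta_j$ is a brief further Sherman--Morrison check using $\bar a^{ij}=(1+\mu\xx)^{1/2}(\delta^{ij}+\mu x^ix^j)$.

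For the second identity of (\ref{yy}) I note first that $\bar\beta$ is closed, so $\bbij$ is symmetric. I would compute it as $\partial_j\bar b_i-\bGkij\bar b_k$, where $\bGkij$ is obtained from $\Gamma^k_{ij}(h)=-\frac{\mu(x_i\delta^k_j+x_j\delta^k_i)}{1+\mu\xx}$ via the conformal-change formula. After combining the various rational powers of $1+\mu\xx$, $\bbij$ splits cleanly into a $\delta_{ij}$-piece and an $x_ix_j$-piece, and matching coefficients against $\bar c(x)\baij+2\bar\theta_i\bbj$ with the same $\bar\theta$ as above forces
\[
\bar c(x)=\frac{\lambda(2+\mu\xx)}{2(1+\mu\xx)^{3/4}},
\]
and verifies the identity. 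Theorem \ref{main1}(3) then concludes that $F$ is dually flat on $\mathbb B^n(r_\mu)$. The main obstacle is the bookkeeping in this last step: the conformal correction contributes several terms to $\bGkij\bar b_k$, and tracking the rational powers of $1+\mu\xx$ so that the $\delta_{ij}$- and $x_ix_j$-coefficients align on both sides requires some care, but the calculation is otherwise mechanical.
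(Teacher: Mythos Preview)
Your proposal is correct and, once one unpacks the paper's proof, it is essentially the same argument: the paper first shows (in the theorem immediately preceding the proof of Theorem~\ref{main2}) that the pair $(\ba,\bb)$ given by $\ba^2=\frac{(1+\mu\xx)\yy-\mu\xy^2}{(1+\mu\xx)^{3/2}}$ and $\bb=\frac{\lambda\xy}{(1+\mu\xx)^{5/4}}$ satisfies the conditions of Theorem~\ref{main1}(3) with exactly your $\bar\theta=-\frac{\mu\xy}{4(1+\mu\xx)}$ and $\bar c(x)=\frac{\lambda(2+\mu\xx)}{2(1+\mu\xx)^{3/4}}$, and then applies the inverse deformation $\a=(1+\bar b^2)^{1/4}\ba$, $\b=(1+\bar b^2)^{-1/4}\bb$ to obtain $F$. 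The only difference is organizational: you run the computation as a direct verification (starting from $F$, passing to $(\ba,\bb)$, and checking (\ref{yy}) via the standard conformal-change formula for the spray of $\ba=e^\sigma h$), whereas the paper phrases the same step through its $\beta$-deformation lemmas and presents it as a construction of $F$ from the constant-curvature data.
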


Taking $\mu=-1$ and $\lambda=\pm1$ in (\ref{example}) we get the Funk metric again. Taking $\mu=0$ and $\lambda=\pm1$ we obtain a simper metric in its form as following:
$$F(x,y)=(1+\xx)^\frac{1}{4}|y|\pm(1+\xx)^{-\frac{1}{4}}\xy.$$

All the examples provided above are non-trivial. See Section \ref{s4} for the reason.

\section{Preliminaries}
Let $M$ be a smooth $n$-dimensional manifold. A Finsler metric $F$ on $M$ is a continuous function
$F:TM\to[0,+\infty)$ with the following properties:
\begin{enumerate}
\item {\it Regularity}: $F$ is $C^\infty$ on the entire slit tangent bundle $TM\backslash\{0\}$;
\item {\it Positive homogeneity}: $F(x,\lambda y)=\lambda F(x,y)$ for all $\lambda>0$;
\item {\it Strong convexity}: the fundamental tensor $g_{ij}:=[\frac{1}{2}F^2]_{y^iy^j}$ is positive definite for all $(x,y)\in TM\backslash\{0\}$.
\end{enumerate}
Here $x=(x^i)$ and $y=(y^i)$ denote the coordinates of the points in $M$ and the vectors in $T_xM$ respectively.

Given a Finsler metric $F$ on $M$. There is a global vector field $G$ on $TM\backslash\{0\}$ called  a {\it spray}. In local coordinates, $G=y^i\frac{\partial}{\partial x^i}-2G^i\frac{\partial}{\partial y^i}$ where
$$G^i:=\f{1}{4}g^{il}\left\{[F^2]_{x^ky^l}y^k-[F^2]_{x^l}\right\}$$
are called the {\it spray coefficients} of $F$. Here $(g^{ij})$ is the inverse of $(g_{ij})$. For a Riemannian metric, the spray coefficients are determined by the Christoffel symbols as
$$G^i(x,y)=\frac{1}{2}\Gamma^i{}_{jk}(x)y^jy^k.$$

In the rest of this section, we introduce some basic notions of $\b$-deformations. Given a Riemannian metric $\a$ and an $1$-form $\b$, set $b:=\|\b\|_\a$. By definition, the $\b$-deformations are a triple of metric deformations in terms of $\a$ and $\b$ listed below:
\begin{eqnarray*}
&\ta=\sqrt{\a^2-\kappa(b^2)\b^2},\qquad\tb=\b;\\
&\ha=e^{\rho(b^2)}\ta,\qquad\hb=\tb;\\
&\ba=\ha,\qquad\bb=\nu(b^2)\hb.
\end{eqnarray*}
Here we choose $b^2$ instead of $b$ as the variable, because it will be convenient for computations. Notice that in order to keep the positive definition of $\ta$, $\kappa(b^2)$ must satisfies an additional condition:
\begin{eqnarray}\label{conditiononk}
1-\kappa b^2>0.
\end{eqnarray}

Obviously, the first two kinds of $\b$-deformations aim for Riemannian metrics and the last one is for $1$-forms. More specifically, the first kind of $\b$-deformation can be regarded as some kind of stretch change for $\a$ along the direction determined by $\b$, the second one is conformal change and the third one is just the length change of $\b$. The main feature of such special kinds of derormations, which makes the whole discussion concise, is that all the deformation factors are functions of $b$ instead of functions of points. .

Some basic formulas are listed below. It should be attention that the notation `$\dot b_{i|j}$' always means the covariant derivative of the $1$-form `$\dot\b$' with respect to the corresponding Riemannian metric `$\dot\a$', where the symbol `~$\dot{}$~' can be `~$\tilde{}$~', `~$\hat{}$~' or `~$\bar{}$~' in this paper. Moreover, we need the following abbreviations,
\begin{eqnarray*}
r_{00}:=r_{ij}y^iy^j,~r_i:=r_{ij}y^j,~r_0:=r_iy^i,~r:=r_ib^i,
~s_{i0}:=s_{ij}y^j,~s^i{}_0:=a^{ij}s_{j0},~s_i:=s_{ij}y^j,~s_0:=s_ib^i,
\end{eqnarray*}
where $\rij$ and $\sij$ are the symmetrization and antisymmetrization of $\bij$ respectively, i.e.,
$$\rij:=\f{1}{2}(\bij+b_{j|i}),\quad\sij:=\f{1}{2}(\bij-b_{j|i}).$$
It is clear that $\sij=0$ if and only if $\b$ is closed.

\begin{lem}\cite{yct-dhfp}\label{beta1}
Let $\ta=\sqrt{\a^2-\kappa(b^2)\b^2}$, $\tb=\b$. Then
\begin{eqnarray*}
\tG&=&\G-\frac{\kappa}{2(1-\kappa b^2)}\big\{2(1-\kappa b^2)\b s^i{}_0+r_{00}b^i+2\kappa s_0\b b^i\big\}\\
&&+\frac{\kappa'}{2(1-\kappa b^2)}\big\{(1-\kappa b^2)\b^2(r^i+s^i)+\kappa r\b^2b^i-2(r_0+s_0)\b
b^i\big\},\\
\tbij7&=&\bij+\frac{\kappa}{1-\kappa b^2}\big\{b^2\rij+\bi\sj+\bj\si\big\}
-\frac{\kappa'}{1-\kappa b^2}\big\{r\bi\bj-b^2\bi(\rj+\sj)-b^2\bj(\ri+\si)\big\}.
\end{eqnarray*}
\end{lem}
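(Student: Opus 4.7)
The plan is to reduce both formulas to computing the tensorial difference $C^k_{ij} := \tGkij - \Gkij$ once, then contract it with $\tfrac12 y^jy^k$ to read off $\tG$ and with $\bk$ to read off $\tbij$.

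First I would record the basic identities forced by the deformation. From $\ta^2 = \a^2 - \kappa\b^2$ one gets $\taij = \aij - \kappa\bi\bj$, which is a rank-one perturbation. The Sherman--Morrison formula (legitimate thanks to (\ref{conditiononk})) then gives
\begin{equation*}
\tilde a^{ij} = a^{ij} + \frac{\kappa}{1-\kappa b^2}\,b^ib^j,
\end{equation*}
which is the sole source of the omnipresent factor $\tfrac{1}{1-\kappa b^2}$ in the statement.

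Second, since the difference of two torsion-free connections is tensorial, I would compute it via the $\a$-covariant formula
\begin{equation*}
C^k_{ij} = \tfrac12 \tilde a^{kl}\bigl(\nabla_j\tail + \nabla_i\tajl - \nabla_l\taij\bigr),
\end{equation*}
where $\nabla$ is the Levi-Civita connection of $\a$. Since $\aij$ is $\nabla$-parallel, only the $-\kappa\bi\bj$ part of $\taij$ contributes, producing
\begin{equation*}
\nabla_l\taij = -\kappa'\,(\nabla_l b^2)\,\bi\bj - \kappa\bigl(b_{i|l}\,\bj + \bi\,b_{j|l}\bigr),
\end{equation*}
together with $\nabla_l b^2 = 2\,b^m b_{m|l}$. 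Splitting $\bij = \rij + \sij$, the symmetrization inherent in the Christoffel formula cleanly separates $r$-type from $s$-type contributions.

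Third I would contract $C^k_{ij}$ with $\tfrac12 y^jy^k$, reorganize by powers of $\kappa$ and $\kappa'$, and translate each term into the abbreviations $\roo$, $\sio$, $r$ and the various contractions of $\rij$, $\sij$ with $y^j$ and $b^i$ used in the statement. The $\kappa$-only contributions come from the second piece of $\nabla_l\taij$ and yield the first curly brace; the $\kappa'$-contributions come from $\nabla_l b^2$ and yield the second curly brace; the Sherman--Morrison correction to $\tilde a^{kl}$ supplies the extra $b^i$ in the $b^i$-aligned terms. The same $C^k_{ij}$, this time contracted with $\bk$, gives $\tbij = \bij - C^k_{ij}\bk$, from which the second formula drops out immediately. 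The main obstacle is entirely bookkeeping rather than conceptual: the three Christoffel terms combined with the rank-one inverse update produce roughly a dozen raw cross terms whose reduction to the compact stated form requires systematic use of $\rij = r_{ji}$, $\sij = -s_{ji}$ and $\sij b^ib^j = 0$, but no further idea beyond that.
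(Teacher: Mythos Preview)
The paper does not prove this lemma at all; it is quoted verbatim from the reference \cite{yct-dhfp} and used as a black box. So there is no proof in the paper to compare against.

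Your outline is the standard and correct route to such a deformation formula: use the tensorial identity $\tGkij-\Gkij=\tfrac12\tilde a^{kl}(\nabla_i\tajl+\nabla_j\tail-\nabla_l\taij)$, plug in $\taij=\aij-\kappa\bi\bj$ together with the Sherman--Morrison inverse, and then perform the two contractions. One small slip: when you say ``contract $C^k_{ij}$ with $\tfrac12 y^jy^k$'' the indices do not match, since $k$ is the upper index; you mean $\tfrac12 C^i{}_{jk}y^jy^k$ (equivalently $\tfrac12 C^k{}_{ij}y^iy^j$). With that corrected, the rest is exactly the bookkeeping you describe, and the identities $\rij=r_{ji}$, $\sij=-s_{ji}$, $b^ib^j\sij=0$, $b^m b_{m|l}=r_l+s_l$ are precisely what is needed to collapse the raw terms into the stated form.
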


\begin{lem}\cite{yct-dhfp}\label{beta2}
Let $\ha=e^{\rho(b^2)}\ta$, $\hb=\tb$. Then
\begin{eqnarray*}
\hG&=&\tG+\rho'\left\{2(r_0+s_0)y^i-(\a^2-\kappa \b^2)\left(r^i+s^i+\frac{\kappa}{1-\kappa b^2}rb^i\right)\right\},\\
\hbij&=&\tbij-2\rho'\left\{\bi(\rj+\sj)+\bj(\ri+\si)-\frac{1}{1-\kappa b^2}r(\aij-\kappa\bi\bj)\right\}.
\end{eqnarray*}
\end{lem}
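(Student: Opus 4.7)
The argument is a routine specialization of the classical conformal-change formulas for the Levi-Civita connection. The plan is to first state those formulas for a general scalar factor $\sigma$, then substitute $\sigma := \rho(b^2)$ and simplify using two auxiliary observations: that $b^2$ is a scalar function on $U$ (so its partial derivative equals its $\a$-covariant derivative) and that $\sij$ is antisymmetric (so $b^ib^j\sij = 0$).

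First I would record the general conformal transformation law. If $\ha = e^\sigma\ta$ for a scalar $\sigma$, then
$$\hGkij = \tGkij + \sigma_i\dkj + \sigma_j\dki - \tilde a^{kl}\taij\sigma_l,\qquad \sigma_l := \partial\sigma/\partial x^l.$$
Contracting with $\tfrac{1}{2}y^iy^j$ and relabeling indices gives $\hG = \tG + \sigma_0 y^i - \tfrac{1}{2}\ta^2\,\tilde\sigma^i$, while contracting against $\bk$ on the $k$-slot gives $\hbij = \tbij - \sigma_i\bj - \sigma_j\bi + \taij\,\tilde\sigma^l b_l$, where $\sigma_0 := \sigma_ly^l$ and $\tilde\sigma^i := \tilde a^{ij}\sigma_j$.

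Next I would evaluate these ingredients for $\sigma = \rho(b^2)$. Since $b^2$ is a scalar, $(b^2)_{x^l}$ may be computed as the $\a$-covariant derivative $2b^jb_{j|l} = 2b^j(r_{jl}+s_{jl})$; with the shorthand $r_l := b^j r_{jl}$, $s_l := b^j s_{jl}$, this is $2(r_l+s_l)$, so $\sigma_l = 2\rho'(r_l+s_l)$ and $\sigma_0 = 2\rho'(r_0+s_0)$. The Sherman--Morrison identity applied to $\taij = \aij - \kappa\bi\bj$ yields $\tilde a^{ij} = a^{ij}+\tfrac{\kappa}{1-\kappa b^2}b^ib^j$, hence
$$\tilde\sigma^i = 2\rho'\Bigl(r^i+s^i+\tfrac{\kappa}{1-\kappa b^2}b^i(r+b^js_j)\Bigr),\qquad \tilde\sigma^lb_l = \tfrac{2\rho'}{1-\kappa b^2}(r+b^js_j).$$
Antisymmetry of $\sij$ forces $b^js_j = b^jb^ks_{kj} = 0$, which erases the $b^js_j$ terms in both displays. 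Feeding the resulting simplified expressions together with $\ta^2 = \a^2 - \kappa\b^2$ into the general spray-coefficient and $1$-form identities above produces the two claimed formulas directly.

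The calculation is otherwise entirely mechanical, with no real obstacle. The one point that needs care is the chain-rule step: because $b = \|\b\|_\a$ is measured against the original metric $\a$, one must differentiate $b^2$ via the $\a$-connection and only then convert to the $\ta$-side through the Sherman--Morrison expansion of $\tilde a^{ij}$. Confusing which metric's inverse appears in $\tilde\sigma^i$, or forgetting that the antisymmetric contraction $b^js_j$ kills the $s$-part of the correction, would be the only easy ways to go wrong.
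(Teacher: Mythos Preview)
Your derivation is correct. The paper does not actually prove this lemma; it is quoted from the reference \cite{yct-dhfp} without argument, so there is nothing to compare against. What you wrote is precisely the standard route one would take: apply the well-known conformal transformation law for the Levi--Civita connection, specialize the conformal factor to $\sigma=\rho(b^2)$, compute $\sigma_l=2\rho'(r_l+s_l)$ via the $\a$-covariant derivative of the scalar $b^2$, invert $\taij=\aij-\kappa\bi\bj$ by Sherman--Morrison, and use the antisymmetry $b^js_j=0$ to clean up. Every step checks out and yields the two stated formulas verbatim.
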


\begin{lem}\cite{yct-dhfp}\label{beta3}
Let $\ba=\ha$, $\bb=\nu(b^2)\hb$. Then
\begin{eqnarray*}
\bG=\hG,\qquad\bbij=\nu\hbij+2\nu'\bi(\rj+\sj).
\end{eqnarray*}
\end{lem}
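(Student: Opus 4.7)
The plan is to handle the two assertions separately using only elementary tensor calculus.

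The spray identity $\bG=\hG$ is immediate. By construction, the third $\b$-deformation leaves the underlying Riemannian metric untouched and only rescales the $1$-form, so $\ba=\ha$ literally as Riemannian metrics. Their Christoffel symbols therefore agree, and the spray coefficients of a Riemannian metric depend only on its Christoffel symbols, so $\bG=\hG$ without any computation.

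For the covariant-derivative formula, I would write $\bar b_i = \nu(b^2)\hat b_i$ and expand using the Christoffel symbols of $\ba$. Because $\bGkij=\hGkij$,
\begin{align*}
\bbij &= \partial_j\bar b_i - \bGkij\bar b_k = \nu\bigl(\partial_j\hat b_i - \hGkij\hat b_k\bigr) + \nu'(b^2)\,\partial_j(b^2)\,\hat b_i \\
&= \nu\,\hbij + \nu'(b^2)\,\partial_j(b^2)\,b_i,
\end{align*}
using $\hat b_i=b_i$ (since $\hb=\tb=\b$). It therefore suffices to evaluate $\partial_j(b^2)$, where $b^2=a^{ik}(x)b_i(x)b_k(x)$ is the squared norm taken with respect to the \emph{original} metric $\a$.

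For this last step, I would expand $\partial_j(a^{ik}b_ib_k)$ as $(\partial_j a^{ik})b_ib_k + 2a^{ik}b_k\partial_j b_i$, convert the ordinary partials to $\a$-covariant derivatives via $\partial_j b_i = \bij + \Gamma^k_{ij}b_k$ and $\partial_j a^{ik} = -a^{il}\Gamma^k_{lj} - a^{km}\Gamma^i_{mj}$, and check that the Christoffel contributions cancel. The remainder is $2b^i\bij$, which by $\bij=\rij+\sij$ and the abbreviations $r_j=\rij b^i$, $s_j=\sij b^i$ equals $2(r_j+s_j)$. Substituting this into the previous display yields the stated formula.

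The computation is short; the only potential pitfall is bookkeeping about which Riemannian metric controls which object. Within the $\b$-deformation framework four metrics $\a,\ta,\ha,\ba$ coexist, and because $\nu$ is a function of $b^2=\|\b\|_\a^2$ with respect to the original $\a$, the factor $r_j+s_j$ that appears on the right is the one defined with $\a$, even though the covariant derivative $\bbij$ on the left is taken with respect to $\ba$. Once this is kept straight, the derivation is entirely routine.
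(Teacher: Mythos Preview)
Your argument is correct. The paper does not supply its own proof of this lemma; it is quoted verbatim from the reference \cite{yct-dhfp}, so there is nothing in the present paper to compare your derivation against. Your computation---observing that $\ba=\ha$ forces $\bGkij=\hGkij$ and hence $\bG=\hG$, then differentiating $\bar b_i=\nu(b^2)b_i$ and reducing $\partial_j(b^2)$ to $2(r_j+s_j)$ via the $\a$-covariant derivative---is exactly the standard route and matches how such transformation formulas are obtained in \cite{yct-dhfp}. Your closing remark about keeping the metrics straight (the $r_j,s_j$ on the right are contractions with $b^i$ relative to the original $\a$, while $\bbij$ on the left is taken with respect to $\ba$) is the one genuinely subtle point, and you have it right.
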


\section{Proof of Theorem \ref{main1}}\label{s3}
In this section, we will simplify those $\a$ and $\b$ who satisfy (\ref{G})-(\ref{sij}) by $\b$-deformations. Our first aim is to make $\a$ dually flat, and then we will seek a suitable standard to simplify $\b$.

Firstly, the dual flatness of a Riemannian metric can be described as following, which is a obvious corollary of Theorem \ref{maincf}.
\begin{lem}\label{Rd}
Let $\a$ be a Riemannian metric on an open subset $U\subseteq\RR^n$. Then $\a$ is dually flat if and only if exists an $1$-form $\theta$ on $U$ such that
\begin{eqnarray}\label{duallyflat}
\G=2\theta y^i+\a^2\theta^i.
\end{eqnarray}
\end{lem}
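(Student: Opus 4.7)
The plan is to derive this lemma as an immediate specialization of Theorem \ref{maincf}, exploiting that a Riemannian metric $\a$ is the Randers metric $F=\a+\b$ with the trivial choice $\b\equiv 0$. Since $F=\a$ in that case, $F$ is dually flat if and only if $\a$ is, so I would simply read off what the three conditions (\ref{G})--(\ref{sij}) say in this trivial situation.

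For the forward direction, I would suppose $\a$ is dually flat and apply Theorem \ref{maincf} to $F=\a$ (i.e.\ with $\b\equiv 0$). Then $b_i\equiv 0$, hence also $b^i\equiv 0$, $b^2\equiv 0$, $\rij\equiv 0$, $\sij\equiv 0$, and $b_k\theta^k\equiv 0$. Substituting these into (\ref{rij}) yields $0=3\tau\a^2$, so $\tau\equiv 0$; substituting into (\ref{sij}) produces the trivial identity $0=0$; and substituting into (\ref{G}) leaves exactly
\begin{eqnarray*}
\G=2\theta y^i+\a^2\theta^i,
\end{eqnarray*}
which is (\ref{duallyflat}).

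For the converse, I would start from an $1$-form $\theta$ satisfying (\ref{duallyflat}) and verify that the triple $(\theta,\tau\equiv 0,\b\equiv 0)$ satisfies all of (\ref{G})--(\ref{sij}) of Theorem \ref{maincf}: equation (\ref{G}) holds by hypothesis, while (\ref{rij}) and (\ref{sij}) become $0=0$. Theorem \ref{maincf} then asserts that the Randers metric $F=\a+0=\a$ is dually flat, i.e.\ $\a$ is dually flat.

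There is no real obstacle here: the only point that requires attention is noting that in the forward direction (\ref{rij}) forces $\tau\equiv 0$, so no residual $\tau\b$ term can survive in (\ref{G}); once this is observed the lemma follows without any additional computation, which is exactly the sense in which the author calls it an obvious corollary of Theorem \ref{maincf}.
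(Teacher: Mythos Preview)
Your proposal is correct and matches the paper's approach exactly: the paper states that the lemma ``is a obvious corollary of Theorem \ref{maincf}'' without further details, and you have spelled out precisely that specialization to $\b\equiv 0$. One very minor remark: in the forward direction you do not actually need to extract $\tau\equiv 0$ from (\ref{rij}) to conclude (\ref{duallyflat}), since with $\b\equiv 0$ the terms $\tau\b$ and $\tau b^i$ in (\ref{G}) already vanish identically regardless of $\tau$; but this is harmless.
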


Suppose that $F=\a+\b$ is a dually flat Randers metric on $U$. According Theorem \ref{maincf}, it is easy to obtain the following simple facts:
\begin{eqnarray}
\rij&=&\theta_i\bj+\theta_j\bi-5\tau\bi\bj+(3\tau+2\tau b^2-2b_k\theta^k)\aij,\label{temp1}\\
\sio&=&\b\theta^i-\theta b^i,\label{temp2}\\
s_0&=&b_k\theta^k\b-b^2\theta,\label{temp3}\\
\ri+\si&=&3\tau(1-b^2)\bi,\label{temp4}\\
\bi\sj+\bj\si&=&2\bk\theta^k\bi\bj-b^2(\theta_i\bj+\theta_j\bi),\label{temp5}\\
r&=&3\tau(1-b^2)b^2.\label{temp6}
\end{eqnarray}

Carry out the first step of $\b$-deformations, then by (\ref{G}), (\ref{rij}), (\ref{temp1})-(\ref{temp6}) and Lemma \ref{beta1} we obtain
\begin{eqnarray*}
\tG&=&(2\theta+\tau\b)y^i-(\tau b^i-\theta^i)\a^2-\f{\kappa}{2(1-\kappa b^2)}\big\{2(1-\kappa b^2)\b(\b\theta^i-\theta b^i)+2\theta\b b^i-5\tau\b^2b^i\nonumber\\
&&+(3\tau+2\tau b^2-2b_k\theta^k)\a^2b^i+2\kappa(b_k\theta^k\b-b^2\theta)\b b^i\big\}+\f{\kappa'}{2(1-\kappa b^2)}\big\{(1-\kappa b^2)\b^2\cdot3\tau(1-b^2)b^i\nonumber\\
&&+\kappa\cdot3\tau(1-b^2)b^2\b^2 b^i\nonumber-6\tau(1-b^2)\b^2b^i\big\}\\
&=&(2\theta+\tau\b)y^i+\ta^2\theta^i-\f{1}{2(1-\kappa b^2)}\big\{(3\tau \kappa+2\tau-2\kappa\bk\theta^k)\ta^2
+3\tau[\kappa^2-\kappa+\kappa'(1-b^2)]\b^2\big\}b^i.
\end{eqnarray*}

We are not sure that it is absolute to make $\a$ dually flat before the whole process of deformations is finished. It is not priori. But if so, combining with Lemma \ref{beta2} and Lemma \ref{Rd} we can see that $\tG$ must be in the following form
$$\tG=Py^i+\ta^2(Q\theta^i+Rb^i).$$
Hence
\begin{eqnarray}\label{u}
\kappa^2-\kappa+\kappa'(1-b^2)=0
\end{eqnarray}
if $\tau\neq0$. In this case,
\begin{eqnarray}\label{tG}
\tG=(2\theta+\tau\b)y^i+\ta^2\theta^i-\f{1}{2(1-\kappa b^2)}(3\tau\kappa+2\tau-2\kappa\bk\theta^k)\ta^2b^i.
\end{eqnarray}

Carry out the second step of $\b$-deformations, then by (\ref{temp4}), (\ref{temp6}), (\ref{tG}) and Lemma \ref{beta2} we obtain
\begin{eqnarray*}
\hG&=&\tG+\rho'\left\{6\tau(1-b^2)\b y^i-\ta^2\left(3\tau(1-b^2)b^i+\f{\kappa}{1-\kappa b^2}\cdot3\tau(1-b^2)b^2b^i\right)\right\}\\
&=&\left\{2\theta+\tau[1+6\rho'(1-b^2)]\b\right\}y^i+\ta^2\theta^i
-\f{1}{2(1-\kappa b^2)}\left\{3\tau\kappa+2\tau+6\tau\rho'(1-b^2)-2\kappa b_k\theta^k\right\}\ta^2b^i.
\end{eqnarray*}
Let
\begin{eqnarray*}
\hat\theta=\theta+\f{1}{2}\tau[1+6\rho'(1-b^2)]\b.
\end{eqnarray*}
It is easy to verify that the inverse of $(\haij)$ is given by
\begin{eqnarray}\label{haIJ}
\hat a^{ij}=e^{-2\rho}\left(a^{ij}+\f{\kappa}{1-\kappa b^2}b^ib^j\right),
\end{eqnarray}
so
\begin{eqnarray*}
\hat\theta^i:=\hat a^{ij}\hat\theta_j=e^{-2\rho}\left\{\theta^i+\f{1}{2(1-\kappa b^2)}\left[2\kappa b_k\theta^k+\tau
+6\tau\rho'(1-b^2)\right]b^i\right\}.
\end{eqnarray*}
Hence $\hG$ can be reexpressed as
\begin{eqnarray*}
\hG=2\hat\theta y^i+\ha^2\hat\theta^i-\f{3\tau e^{-2\rho}}{2(1-\kappa b^2)}\left\{1+\kappa+4\rho'(1-b^2)\right\}\ha^2b^i.
\end{eqnarray*}
It is easy to see from the above equality that the sufficient condition for $\ha$ to be of dual flatness is
\begin{eqnarray}\label{rho}
1+\kappa+4\rho'(1-b^2)=0
\end{eqnarray}
if $\tau\neq0$. In this case, $\hG=2\hat\theta y^i+\ha^2\hat\theta^i$ where
\begin{eqnarray*}
\hat\theta=\theta-\f{1}{4}\tau(1+3\kappa)\b.
\end{eqnarray*}

So far, we have achieved our first aim. The output Riemannian metric $\ha$ will be dually flat  as long as the deformation factor $\kappa$ and $\rho$ satisfy (\ref{u}) and (\ref{rho}). In other words, the property of the Riemannian metric is clear. It is simple for our question, but the $1$-form is not enough.

Actually, under the deformations used above, we can see by Lemma \ref{beta2} and (\ref{u}) that
\begin{eqnarray*}
\trij&=&\f{1}{1-\kappa b^2}\left\{\rij+2\kappa b_k\theta^k\bi\bj-\kappa b^2(\theta_i\bj+\theta_j\bi)+3\tau \kappa'(1-b^2)b^2\bi\bj\right\}\\
&=&\theta_i\bj+\theta_j\bi+\f{1}{1-\kappa b^2}\big\{(3\tau+2\tau b^2-2b_k\theta^k)\aij
-[5\tau-2\kappa b_k\theta^k-3\tau \kappa'(1-b^2)b^2]\bi\bj\big\}\\
&=&\theta_i\bj+\theta_j\bi+\f{1}{1-\kappa b^2}\left\{3\tau+2\tau b^2-2\bk\theta^k\right\}\taij+\tau(3\kappa-5)\bi\bj\\
\tilde s_{ij}&=&\sij=\theta_i\bj-\theta_j\bi.
\end{eqnarray*}
Similarly, by Lemma \ref{beta3} and (\ref{rho}) we get
\begin{eqnarray*}
\hrij&=&\trij+\f{\kappa+1}{2(1-b^2)}\left\{6\tau(1-b^2)\bi\bj-\f{1}{1-\kappa b^2}\cdot3\tau(1-b^2)b^2\taij\right\}\\
&=&\theta_i\bj+\theta_j\bi+\f{e^{-2\rho}}{2(1-\kappa b^2)}\left\{6\tau+\tau b^2-3\tau \kappa b^2-4\bk\theta^k\right\}\haij+2\tau(3\kappa-1)\bi\bj,\\
\hat s_{ij}&=&\sij=\theta_i\bj-\theta_j\bi.
\end{eqnarray*}
If we use $\hat\theta$ instead of $\theta$ to express $\hrij$ and $\hat s_{ij}$, then
\begin{eqnarray*}
\hrij&=&\hat\theta_i\hbj+\hat\theta_j\hbi+\f{e^{-2\rho}}{2(1-\kappa b^2)}\left\{6\tau+\tau b^2-3\tau \kappa b^2-4\bk\theta^k\right\}\haij1+\f{3}{2}\tau(5\kappa-1)\hbi\hbj,\\
\hat s_{ij}&=&\hat\theta_i\hbj-\hat\theta_j\hbi,
\end{eqnarray*}
where $\hbi=\bi$ according to $\b$-deformations.

No matter which expression is chose, one can see that the covariant derivation of the corresponding $1$-forms share common features. Firstly, $\sij$ has some invariance. Secondly, $\rij$ always is composed at most by three terms, including the `linear' terms $\theta_i\bj+\theta_j\bi$ and $\aij$, and most important, the `nonlinear' term $\bi\bj$. More specifically, the linear combination of two $1$-forms satisfying
$$\rij=\theta_i\bj+\theta_j\bi+c(x)\aij,\qquad\sij=\theta_i\bj-\theta_j\bi$$
will have the same properties. We believe that it is the best form for $\b$ to be of such linear structure. Hence our second aim is to make $\b$ linear.

Carry our the third step of $\b$-deformations, then by (\ref{temp4}) and Lemma \ref{beta3} we obtain
\begin{eqnarray*}
\brij&=&\nu\hrij+\nu'\cdot6\tau(1-b^2)\bi\bj,\\
&=&\bar\theta_i\bbj+\bar\theta_j\bbi+\f{e^{-2\rho}\nu}{2(1-\kappa b^2)}\left\{6\tau+\tau b^2-3\tau \kappa b^2-4\bk\theta^k\right\}\baij
+\frac{3}{2}\tau\left\{(5\kappa-1)\nu+4(1-b^2)\nu'\right\}\hbi\hbj,\\
\bar s_{ij}&=&\nu\sij=\nu(\hat\theta_i\hbj-\hat\theta_j\hbi)=\bar\theta_i\bbj-\bar\theta_j\bbi,
\end{eqnarray*}
where $\bar\theta:=\hat\theta$. So the sufficient condition for $\bb$ to be linear is
\begin{eqnarray}\label{nu}
(5\kappa-1)\nu+4(1-b^2)\nu'=0,
\end{eqnarray}
if $\tau\neq0$.

In order to complete the deformations. We need to choose some suitable deformation factors. It is easy to see that $\kappa=1$ and $\kappa=0$ are both solutions of (\ref{u}), and they also satisfy (\ref{conditiononk}) since $b^2<1$ for Randers metrics.

If $\kappa=1$, then by (\ref{rho}) and (\ref{nu}) the deformation factors can be taken as
$$e^\rho=\sqrt{1-b^2},\qquad\nu=-(1-b^2),$$
which just corresponds to the navigation expression of Randers metrics. On can verify that
\begin{eqnarray*}
\brij=\bar\theta_i\bbj+\bar\theta_j\bbi-(2\bar b_k\bar\theta^k+3\tau)\baij,\qquad
\bar s_{ij}=\bar\theta_i\bbj-\bar\theta_j\bbi,
\end{eqnarray*}
where $\bar\theta^i:=\bar a^{ij}\bar\theta_j$. The above equalities are equivalent to
$$\bbij=2\bar\theta_i\bbj+c(x)\baij,$$
where $c(x)=-2\bar b_k\bar\theta^k-3\tau$. Replace $\ba$ and $\bb$ as $h$ and $W$, then the above discussion shows that the second item of Theorem \ref{main1} holds when $F=\a+\b$ is dually flat.

Conversely, the above deformations are reversible. In fact, according to the navigation expression of Randers metrics we know that
$$\a=(1-\bar b)^{-1}\sqrt{(1-\bar b)\ba^2+\bb^2},\quad\b=-(1-\bar b^2)^{-1}\bb.$$
Hence, if $\ba$ and $\bb$ , namely $h$ and $W$, satisfy the second item of Theorem \ref{main1}, then $\a$ and $\b$ will satisfy (\ref{G})-(\ref{sij}), thus $F=\a+\b$ is dually flat.

If $\kappa=0$, then by (\ref{rho}) and (\ref{nu}) the deformation factors can be taken as
$$e^\rho=(1-b^2)^\frac{1}{4},\qquad\nu=(1-b^2)^{-\frac{1}{4}}.$$
In this case, by (\ref{haIJ}) we have
$$\bar b^2=\nu b_i\nu b_je^{-2\rho}\left(a^{ij}+\f{\kappa}{1-\kappa b^2}b^ib^j\right)=\f{b^2}{1-b^2},$$
namely $(1+\bar b^2)(1-b^2)=1$. Hence
\begin{eqnarray}\label{fang}
\a=(1+\bar b^2)^\frac{1}{4}\ba,\qquad\b=(1+\bar b^2)^{-\frac{1}{4}}\bb.
\end{eqnarray}
In other words, the deformations are reversible. On can verify that
\begin{eqnarray*}
\brij=\bar\theta_i\bbj+\bar\theta_j\bbi-\left\{2\bar b_k\bar\theta^k-3\tau(1-b^2)^{-\frac{3}{4}}\right\}\baij,\qquad\bar s_{ij}=\bar\theta_i\bbj-\bar\theta_j\bbi,
\end{eqnarray*}
which is equivalent to
$$\bbij=2\bar\theta_i\bbj+c(x)\baij,$$
where $c(x)=-2\bar b_k\bar\theta^k+3\tau(1-b^2)^{-\frac{3}{4}}$. Hence, $F=\a+\b$ is dually flat if and only if the third item of Theorem \ref{main1} holds.

\section{Some constructions}\label{s4}

In this section, we aim to provide some explicit dually flat Randers metrics.

Firstly, by the arguments in Section 3 we can see that the $1$-forms which satisfy (\ref{yy}) play an important role in our question, which inspire us to introduce the following concept.
\begin{definition}
Let $\a$ be a locally dually flat Riemannian metric on a manifold $M$. Suppose that the spray coefficients $\G$ of $\a$ are given in an adapted coordinate system by (\ref{duallyflat}) with some $1$-form $\theta$ on $M$. Then an $1$-form $\b$ on $M$ is said to be {\em dually related} with respect to $\a$ if
\begin{eqnarray}\label{duallyrelated}
\bij=2\theta_i\bj+c(x)\aij,
\end{eqnarray}
where $c(x)$ is a scalar function on $M$.
\end{definition}

Using the above concept, Theorem \ref{main1} tell us that
\begin{thm}
A Randers metric $F=\a+\b$ is locally dually flat if and only if $h$ is locally dually flat and $W^\flat$ is dually related with respect to $h$, where $(h,W)$ is the navigation data of $F$.
\end{thm}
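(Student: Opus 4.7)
The statement is a direct reformulation of Theorem \ref{main1} in the language of the freshly introduced notion of dually related $1$-forms; consequently the plan is to verify that the two conditions appearing in the statement match precisely the two equations in item (2) of Theorem \ref{main1}, and then to quote that theorem.

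First, I will interpret the spray condition $G^i_h=2\xi y^i+h^2\xi^i$ appearing in (\ref{xx}). By Lemma \ref{Rd}, the existence of a $1$-form $\xi$ on $U$ satisfying this relation is exactly equivalent to $h$ being locally dually flat; moreover, in that case the $1$-form $\xi$ is precisely the one that plays the role of ``$\theta$'' in an adapted coordinate chart for $h$. Thus the first half of (\ref{xx}) is nothing more than the local dual flatness of $h$, together with the identification of a canonical choice of $\xi$.

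Second, once $\xi$ has been fixed by the dual flatness of $h$, the remaining equation in (\ref{xx}) reads $W_{i|j}=c(x)h_{ij}+2\xi_iW_j$ (restoring the indices as dictated by the derivation in Section \ref{s3}, where the intermediate formula $\bbij=2\bar\theta_i\bbj+c(x)\baij$ is obtained). This is \emph{literally} the defining identity (\ref{duallyrelated}) for $W^\flat$ to be dually related with respect to $h$, once one substitutes $\xi$ for $\theta$ and the components $W_i$ of $W^\flat$ for $b_i$. Hence the second half of (\ref{xx}) is equivalent to $W^\flat$ being dually related with respect to $h$, and the $\xi$ occurring there is forced to be the one chosen in the first step.

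Combining these two observations, item (2) of Theorem \ref{main1} is exactly the conjunction ``$h$ is locally dually flat and $W^\flat$ is dually related with respect to $h$''. Invoking the equivalence (1)$\Leftrightarrow$(2) of Theorem \ref{main1} then yields the desired characterization of locally dually flat Randers metrics. No new computation is required, and I do not anticipate any serious obstacle: the only point one has to be careful about is the bookkeeping of the $1$-form $\xi=\theta$, namely that the two clauses in the statement must be coupled through a single common choice of $\xi$ rather than being imposed with two unrelated $1$-forms, which is automatic from the way (\ref{duallyrelated}) is phrased relative to (\ref{duallyflat}).
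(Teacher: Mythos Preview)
Your proposal is correct and matches the paper's approach exactly: the paper does not give a separate proof of this theorem but simply introduces the notion of a dually related $1$-form and states that, in this language, Theorem~\ref{main1} yields the result. Your identification of the two clauses of (\ref{xx}) with the dual flatness of $h$ (via Lemma~\ref{Rd}) and the dual-relatedness of $W^\flat$ (via (\ref{duallyrelated})) is precisely the intended translation.
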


Before the further discussions, it is worth to point out that $\a$ and $\b$ satisfying
\begin{eqnarray}\label{special}
\G=2\theta y^i+\a^2\theta^i,\qquad\bij=2\theta_i\bj-2b_k\theta^k\aij
\end{eqnarray}
is a very special case. In our opinion, it is a trivial case in a sense.

The first reason is that this property is preserved by any $\b$-deformations. Specifically speaking, if a couple of data ($\a$,$\b$) has such property, then no matter what $\b$-deformations are carried out, the output ($\ba$,$\bb$) (including the middle outputs ($\ta$,$\tb$) and ($\ha$,$\hb$)) also has the same property. This phenomenon is not hard to be found in the arguments of Section 3.

Secondly, if $\a$ and $\b$ satisfy (\ref{special}), then for any suitable function $\phi(s)$, the $\ab$-metric $F=\a\p(\frac{\b}{\a})$ is dually flat. This result have been proved in \cite{xoldf}. Actually, it will be hold too for the general $\ab$-metric $F=\a\phi(b^2,\frac{\b}{\a})$.

We don't known how to solve (\ref{special}) yet. It is possible that there is not any non-zero $1$-form satisfying (\ref{special}) when $\a$ is non-Euclidean. Anyway, we will avoid this `trivial' case in the following discussions.

Let $\a$ and $\b$ be
\begin{eqnarray}
&\displaystyle\a=\frac{\sqrt{(1+\mu|x|^2)|y|^2-\mu\langle x,y\rangle^2}}{1+\mu|x|^2},&\label{csc}\\
&\displaystyle\b=\frac{\lambda\langle x,y\rangle+(1+\mu|x|^2)\langle a,y\rangle-\mu\langle a,x\rangle\langle x,y\rangle}{(1+\mu|x|^2)^\frac{3}{2}},\label{cc}&
\end{eqnarray}
where $\lambda$ is a constant number and $a$ is a constant vector, then $\a$ is of constant sectional curvature $\mu$ and
\begin{eqnarray}\label{cfactor}
\bij=\f{\lambda-\mu\langle a,x\rangle}{\sqrt{1+\mu|x|^2}}\aij,
\end{eqnarray}
which means that $\b$ is closed and {\it conformal} with respect to $\a$. These special Riemannian metrics and $1$-forms play an important role in projective Finsler geometry\cite{yct-dhfp,yct-zhm-onan}.

In terms of (\ref{csc}) and (\ref{cc}) we can construct some dually flat Riemannian metrics and dually related $1$-forms. The main method is still $\b$-deformations.

\begin{thm}
The Riemannian metrics
\begin{eqnarray}\label{dfR}
\ba=\frac{\sqrt{(1+\mu|x|^2)|y|^2-\mu\langle x,y\rangle^2}}{(1+\mu|x|^2)^\frac{3}{4}}
\end{eqnarray}
are dually flat on $\mathbb B^n(r_\mu)$, and the $1$-forms
\begin{eqnarray}\label{drb}
\bb=\f{\lambda\xy}{(1+\mu|x|^2)^\frac{5}{4}}
\end{eqnarray}
are dually related with respect to $\ba$.
\end{thm}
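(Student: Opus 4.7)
The plan is to realize $\ba$ and $\bb$ as $\b$-deformations of the pair $(\a,\b)$ from (\ref{csc})--(\ref{cc}) with the constant vector $a$ set to zero. For this specialized pair, $\a$ has constant sectional curvature $\mu$, so $\G = -\f{\mu\xy}{1+\mu\xx}y^i$, and $\b$ is $\a$-closed and $\a$-conformal with $\bij = c(x)\aij$, where $c(x) = \lambda/\sqrt{1+\mu\xx}$ by (\ref{cfactor}). In particular $\sij=0$, and the relevant invariants reduce to simple multiples of $c$: $r_i = cb_i$, $r = cb^2$, $r^i = cb^i$, $r_0 = c\b$, while $s_i = s_0 = s^i = 0$.

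A direct computation gives $b^2 = \lambda^2\xx/(1+\mu\xx)$, so $1+\mu\xx$ is expressible as a function of $b^2$. I would then apply the $\b$-deformations with $\kappa \equiv 0$ (making the first stage trivial), $e^{\rho(b^2)} = (1+\mu\xx)^{1/4}$, and $\nu(b^2) = (1+\mu\xx)^{1/4}$. A short check shows these produce exactly $\ba = e^\rho\a$ matching (\ref{dfR}) and $\bb = \nu\b$ matching (\ref{drb}).

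To establish dual flatness of $\ba$, I would evaluate $\bG = \hG$ via Lemma \ref{beta2} with $\kappa \equiv 0$ and the closed-conformal data above. After the substitution the formula collapses to
\begin{eqnarray*}
\bG &=& -\f{\mu\xy}{2(1+\mu\xx)}y^i - \f{\mu(1+\mu\xx)}{4}\a^2 x^i.
\end{eqnarray*}
Setting $\bar\theta_i := -\f{\mu x^i}{4(1+\mu\xx)}$ and using $\bar a^{ij} = e^{-2\rho}a^{ij} = (1+\mu\xx)^{1/2}(\delta^{ij}+\mu x^ix^j)$ together with $\ba^2 = (1+\mu\xx)^{1/2}\a^2$, a short check gives $\bG = 2\bar\theta y^i + \ba^2\bar\theta^i$, so $\ba$ is dually flat by Lemma \ref{Rd}.

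For the dually related condition, I would compute $\bbij$ by applying Lemmas \ref{beta2} and \ref{beta3} in turn. Since $\sij = 0$ throughout, the resulting $\bbij$ is symmetric and collects into the form $A(x)\baij + B(x)\bi\bj$ for explicit scalars $A,B$. The crucial observation is that both $\bar\theta_i$ and $\bbi$ are proportional to $x^i$, so the $\bi\bj$ term can be rewritten as a scalar multiple of $\bar\theta_i\bbj$, leaving $\bbij = 2\bar\theta_i\bbj + \bar c(x)\baij$ for an explicit $\bar c(x)$. The main obstacle will be the bookkeeping with the fractional powers of $1+\mu\xx$, ensuring the $\bi\bj$ piece collapses cleanly into $\bar\theta_i\bbj$ without leaving a stray contribution; a direct Christoffel-symbol computation would alternatively verify both identities, but the deformation approach mirrors the paper's methodology and explains why these particular explicit formulas appear.
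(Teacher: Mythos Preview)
Your proposal is correct and follows essentially the same approach as the paper: apply $\b$-deformations with $\kappa\equiv 0$, $e^{\rho}=\nu=(1+\mu\xx)^{1/4}$ to the constant-curvature pair $(\a,\b)$ from (\ref{csc})--(\ref{cc}) with $a=0$, and use Lemmas~\ref{beta2}--\ref{beta3} together with the closed-conformal data $\rij=\sigma\aij$, $\sij=0$ to obtain $\bG=2\bar\theta y^i+\ba^2\bar\theta^i$ and $\bbij=2\bar\theta_i\bbj+\bar c(x)\baij$. The only cosmetic difference is that the paper \emph{derives} the factors $\kappa,\rho,\nu$ step by step (solving $\kappa'=-\kappa^2$, then $P=-4\sigma\rho'\b$, then $\nu'/\nu=\rho'$), whereas you specify them at the outset and verify; the computations and the identification of $\bar\theta$ are identical.
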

\begin{proof}
Direct computations show that
\begin{eqnarray}\label{gpp}
\G=Py^i,
\end{eqnarray}
where $P=-\frac{\mu\xy}{1+\mu\xx}$, and by (\ref{cfactor}) we have
\begin{eqnarray}\label{rs}
\rij=\sigma\aij,\qquad\sij=0,
\end{eqnarray}
where $\sigma=\frac{\lambda-\mu\langle a,x\rangle}{\sqrt{1+\mu|x|^2}}$.

Carry out the first step $\b$-deformations, then by (\ref{gpp}), (\ref{rs}) and Lemma \ref{beta1} we obtain
\begin{eqnarray*}
\tG&=&\G-\f{\kappa}{2(1-\kappa b^2)}\roo b^i+\f{\kappa'}{2(1-\kappa b^2)}\left\{(1-\kappa b^2)\b^2r^i+\kappa r\b^2b^i-2r_0\b b^i\right\}\\
&=&Py^i-\f{\sigma}{2(1-\kappa b^2)}(\kappa\a^2+\kappa'\b^2)b^i.
\end{eqnarray*}
Combining with Lemma \ref{beta2} one can see that $\ha$ cann't be dually flat only until $\tG$ has the following form
$$\tG=Py^i+Q\ta^2\b^i.$$
So $\kappa$ must satisfy the following equation
$$\kappa'=-\kappa^2.$$
It is obvious that $\kappa$ can be taken as $\kappa=0$, which means that the first step of $\b$-deformations is not necessary.

Carry out the second step of $\b$-deformations, then by (\ref{gpp}), (\ref{rs}) and Lemma \ref{beta2} we obtain
\begin{eqnarray}
\hG&=&\G+\rho'(2r_0y^i-\a^2r^i)
=(P+2\sigma\rho'\b)y^i-\sigma\rho'\a^2 b^i
=(P+2\sigma\rho'\b)y^i-\sigma\rho'e^{-2\rho}\ha^2 b^i.\label{hGtheta}
\end{eqnarray}
Hence, $\ha$ is dually flat if and only if
$$P+2\sigma\rho'\b=-2\sigma\rho'e^{-2\rho}b^i\hat y_i,$$
where $\hat y_i:=\haij y^j$. The above equality is equivalent to
\begin{eqnarray}\label{prho}
P=-4\sigma\rho'\b.
\end{eqnarray}
It is easy to see that the constant vector $a$ must be zero if $\mu\neq0$. In this case, $\b$ is given by
\begin{eqnarray*}
\b=\f{\lambda\xy}{(1+\mu|x|^2)^\frac{3}{2}}.
\end{eqnarray*}
Obviously, it is trivial when $\lambda=0$, so we will assume $\lambda\neq0$ in the following arguments. Direct computations show that
$$b^2=\f{\lambda^2\xx}{1+\mu\xx},$$
namely,
\begin{eqnarray}\label{bx}
(\lambda^2-\mu b^2)(1+\mu\xx)=\lambda^2.
\end{eqnarray}
Thus, the equation (\ref{prho}) becomes
$$\rho'=\f{\mu}{4(\lambda^2-\mu b^2)},$$
so $\rho$ can be chose as
$$\rho=\f{1}{4}\left(\ln\lambda^2-\ln(\lambda^2-\mu b^2)\right)=\f{1}{4}\ln(1+\mu\xx),$$
which implies that (\ref{dfR}) are dually flat. In this case, the corresponding $1$-forms $\hat\theta$ are given by
\begin{eqnarray*}
\hat\theta=\f{1}{2}(P+2\sigma\rho'\b)=-\sigma\rho'\b=-\f{\mu\xy}{4(1+\mu\xx)}
\end{eqnarray*}
according to (\ref{hGtheta}), (\ref{prho}) and (\ref{bx}). Moreover, by Lemma \ref{beta2} we have
\begin{eqnarray*}
\hrij=\rij-2\rho'(\bi\rj+\bj\ri-r\aij)=\sigma(1+2b^2\rho')e^{-2\rho}\haij-4\sigma\rho'\bi\bj.
\end{eqnarray*}

Carry out the third step of $\b$-deformations, then
\begin{eqnarray*}
\brij&=&\nu\hrij+\nu'(\bi\rj+\bj\ri)\\
&=&\nu\sigma(1+2b^2\rho')e^{-2\rho}\haij+2(\sigma\nu'-4\sigma\rho'\nu)\bi\bj\nonumber\\
&=&\nu\sigma(1+2b^2\rho')e^{-2\rho}\baij+2\left(2-\f{\nu'}{\nu\rho'}\right)\bar\theta_i\bbj,
\end{eqnarray*}
where $\bar\theta:=\hat\theta$. The above equality is equivalent to
\begin{eqnarray*}
\bbij=\nu\sigma(1+2b^2\rho')e^{-2\rho}\baij+2\left(2-\f{\nu'}{\nu\rho'}\right)\bar\theta_i\bbj
\end{eqnarray*}
since $\bar s_{ij}=0$. It is obviously that $\bb$ is dually related with respect to $\ba$ if and only if
$$\f{\nu'}{\nu}=\rho',$$
thus $\nu$ can be taken as
$$\nu=e^\rho=(1+\mu\xx)^\frac{1}{4},$$
in this case, $\bb$ is given by (\ref{drb}), and
$$\bbij=\bar c(x)\baij+2\bar\theta_i\bbj,$$
where
$$\bar c(x)=\frac{\lambda}{2}\cdot\f{2+\mu\xx}{(1+\mu\xx)^\frac{3}{4}}.$$

Finally, one can verify that
$$\bar c(x)+2\bar b_k\bar\theta^k=\f{\lambda}{(1+\mu\xx)^\frac{3}{4}},$$
which means that the $1$-forms (\ref{drb}) are non-trivial when $\lambda\neq0$.
\end{proof}

\begin{proof}[Proof of Theorem \ref{main2}]
According the second item of Theorem \ref{main1} and combining with (\ref{fang}), the Randers metrics
$$F=(1+\bar b^2)^{\frac{1}{4}}\ba+(1+\bar b^2)^{-\frac{1}{4}}\bb$$
are dually flat if $\ba$ and $\b$ are given by (\ref{dfR}) and (\ref{drb}), where $\bar b^2$ is given by
$$\bar b^2=\f{\lambda^2\xx}{1+\mu\xx}.$$
Thus $F$ are given by (\ref{example}).

Similarly, according the third item of Theorem \ref{main1} we will obtain the following dually flat Randers metrics
\begin{eqnarray*}
F=\f{\sqrt[4]{1+\mu\xx}\sqrt{(1+(\mu-\lambda^2)\xx)\yy-(\mu-\lambda^2)\xy^2}}
{1+(\mu-\lambda^2)\xx}-\f{\lambda\xy}{(1+(\mu-\lambda^2)\xx)\sqrt[4]{1+\mu\xx}},
\end{eqnarray*}
which are the same as (\ref{example}), just by changing $\mu$ to $\mu+\lambda^2$ and $\lambda$ to $-\lambda$.
\end{proof}

\noindent Changtao Yu\\
School of Mathematical Sciences, South China Normal
University, Guangzhou, 510631, P.R. China\\
aizhenli@gmail.com

\end{document}